\title{An Analysis of the Least Median of Squares Regression Problem\thanks{Computational Statistics Vol. 1: COMPSTAT Proceedings of the 10th Symposium on Computational Statistics, Neuchatel, Switzerland, August 1992. Physica-Verlag, Heidelberg, 1992, pp~471-476.}
}
\author{Nikolai Krivulin\thanks{Faculty of Mathematics and Mechanics, St.~Petersburg State University}}
\date{}
\newtheorem{theorem}{Theorem}
\newtheorem{lemma}{Lemma}
\begin{document}

\maketitle

\begin{abstract}
The optimization problem that arises out of the least median of squared residuals method in linear regression is analyzed. To simplify the analysis, the problem is replaced by an equivalent one of minimizing the median of absolute residuals. A useful representation of the last problem is given to examine properties of the objective function and estimate the number of its local minima. It is shown that the exact number of local minima is equal to $  {p+\lfloor (n-1)/2 \rfloor \choose{p}} $, where $  p  $ is the dimension of the regression model and $  n  $ is the number of observations. As applications of the results, three algorithms are also outlined.
\end{abstract}

\section{Introduction}

The {\it least median of squares\/} (LMS) method has recently been proposed by
Rousseeuw in \cite{Rous84} to provide a very robust estimate of parameters in
linear regression problems. The LMS estimate can be obtained as the solution
of the following optimization problem.

Let $  {x}^{\top}_{i} = (x_{i1}, \ldots ,x_{ip}),\: i=1, \ldots ,n $, and
$  {y}=(y_{1}, \ldots ,y_{n})^{\top}  $ be given real vectors. We assume
that $  n/2 \geq p  $ and the $  (n \times p) $--matrix
$  X = [x_{ij}]  $ is of full rank to avoid degenerate cases. Let
$  {\theta} = (\theta_{1}, \ldots , \theta_{p})^{\top}  $ be a vector of
regression parameters. The optimization problem that arises out of the LMS
method is to find $  \theta^{*}  $ providing
\begin{equation} \label{equ1}
\min_{\theta} \; \mbox{med} \:
\{ (y_{i} - {x}_{i}^{\top} {\theta})^{2} \}.
\end{equation}

It is known (see \cite{Edel90,Joss90,Rous84,Souv87}) that the objective
function in (\ref{equ1}) is hard to minimize. This function is multi-extremal,
it is considered as having $  O(n^{p})  $ local minima. In fact, there
are efficient and exact algorithms available only for the problems of the
lowest dimensions. A simple algorithm for $  p=1  $ can be found in
\cite{Rous84}. Another two designed for the problems of the dimension
$  p=2  $ have been described in \cite{Edel90} and \cite{Souv87}. For other
dimensions, there are probabilistic algorithms producing approximate solutions
of the problem (see \cite{Atki91} and \cite{Joss90}).

The purpose of this paper is to present some new ideas concerning the LMS
problem so as to provide some theoretical framework for efficient regression
algorithms. In Section~2 we offer a useful representation of the problem. The
representation is exploited in Section~3 to demonstrate properties of the
objective function and estimate the number of its local minima. Section~4
includes our main result providing the exact number of local minima. Finally,
in Section~5 we briefly outline three LMS regression algorithms based on the
above results.

\section{Representation of the LMS Problem}

To produce our representations, we first replace (\ref{equ1}) by an equivalent
problem just examined below. Obviously, the solutions of (\ref{equ1}) are
exactly the same as those of the problem:
\begin{equation} \label{equ2}
\min_{\theta} \; \mbox{med} \: \{ |y_{i} - {x}_{i}^{\top} {\theta}| \}.
\end{equation}

A serious difficulty one meets in analyzing both problems (\ref{equ1}) and
(\ref{equ2}) is that it is hard to understand how the median behaves as the
objective function. The next result offers a useful representation for the
median as well as for other operators defined by means of ordering.

Let $  R = \{ r_{1}, \ldots ,r_{n} \}  $ be a finite set of real numbers.
Suppose that we arrange its elements in order of increase, and denote the
$  k $th smallest element by $  r_{(k)} $. If there are elements of the
equal value, we count them repeatedly in an arbitrary order.

\begin{lemma}
For each $  k = 1, \ldots ,n, $ the value of $  r_{(k)}  $ is given by
\begin{equation} \label{equ3}
r_{(k)} = \min_{I \in \Im_{\scriptstyle k}} \; \max_{ i \in I } \: r_{i},
\end{equation}
where $  \Im_{k}  $ is the set of all $k$-subsets of the set
$  N = \{1, \ldots ,n \} $.
\end{lemma}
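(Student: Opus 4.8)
The plan is to prove the identity by a double inequality, establishing that $r_{(k)}$ is simultaneously an upper bound and a lower bound for the right-hand side of (\ref{equ3}). The key observation is that a $k$-subset $I$ of $N$ is ``good'' precisely when it contains at least one index $i$ with $r_i \geq r_{(k)}$; in that case $\max_{i \in I} r_i \geq r_{(k)}$, while a judicious choice of $I$ makes this maximum exactly $r_{(k)}$.

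First I would prove the lower bound $\min_{I \in \Im_k} \max_{i \in I} r_i \geq r_{(k)}$. Fix any $k$-subset $I$. Among the indices $1,\ldots,n$, exactly $k-1$ of them carry the values $r_{(1)}, \ldots, r_{(k-1)}$ (counting repetitions as in the statement), so at most $k-1$ indices $i$ satisfy $r_i < r_{(k)}$ in the ordered enumeration. Since $|I| = k$, the set $I$ must contain at least one index whose value is $r_{(k)}$ or larger; hence $\max_{i \in I} r_i \geq r_{(k)}$. As $I$ was arbitrary, the minimum over all $k$-subsets is also $\geq r_{(k)}$.

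Next I would prove the upper bound by exhibiting a single $k$-subset that attains the value $r_{(k)}$. Let $I_0$ be the set of indices corresponding to the $k$ smallest elements $r_{(1)}, \ldots, r_{(k)}$ in the fixed enumeration. Then $\max_{i \in I_0} r_i = r_{(k)}$, so $\min_{I \in \Im_k} \max_{i \in I} r_i \leq r_{(k)}$. Combining the two bounds yields (\ref{equ3}) for every $k = 1,\ldots,n$.

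I do not anticipate a serious obstacle; the only point requiring mild care is the bookkeeping with repeated values, since ``the index of $r_{(k)}$'' is not well defined when ties occur. This is handled by the convention already adopted in the excerpt — ties are broken arbitrarily but fixed once and for all — so that the enumeration $r_{(1)} \leq \cdots \leq r_{(n)}$ assigns a definite index to each rank, and the counting argument in the lower bound goes through verbatim. One should also note the two extreme cases as sanity checks: $k=1$ gives $r_{(1)} = \min_i r_i$ and $k=n$ gives $r_{(n)} = \max_i r_i$, both of which are immediate from (\ref{equ3}).
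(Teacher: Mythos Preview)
Your proposal is correct and follows essentially the same route as the paper: both arguments exhibit the index set $I_0$ (the paper's $I^*$) of the $k$ smallest elements to get the upper bound, and use a pigeonhole observation---any $k$-subset must contain an index with value at least $r_{(k)}$---for the lower bound. Your write-up is somewhat more explicit about ties and includes the sanity checks, but the underlying idea is identical.
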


\begin{proof} Denote the set of indices of the first $  k  $ smallest
elements by $  I^{*} $. It is clear that
$  r_{(k)} = \max_{ i \in I^{*} } r_{i} $. Consider an arbitrary subset
$  I \in \Im_{k} $. Obviously, if $  I \neq I^{*} $, there is at least one
index $  j \in I  $ such that $  r_{j} \geq r_{(k)} $. Therefore, we have
$  r_{(k)} \leq \max_{ i \in I }   r_{i} $. It remains to take minimum over
all $  I \in \Im_{k}  $ in the last inequality so as to get (\ref{equ3}).
\end{proof}

Let $  h= \lfloor n/2 \rfloor + 1 $, where $  \lfloor n/2 \rfloor  $ is the
largest integer less than or equal to $  n/2 $. For simplicity, we assume
$  \mbox{med}_{ i \in N } \: r_{i} = r_{(h)}  $. (It is absolutely correct
to define the median in this form if $  n  $ is odd. However, for an even
$  n $, it is normally defined as $  \frac{1}{2}( r_{(h-1)} + r_{(h)} ) $.)
By using (\ref{equ3}) with $  k = h  $ and
$  r_{i} = r_{i} ( {\theta} ) = |y_{i} - {x}_{i}^{\top} {\theta}| $, we may
now rewrite (\ref{equ2}) as follows:
\begin{equation}\label{equ4}
\min_{\theta} \: \min_{ I \in \Im_{\scriptstyle {h} } } \:
\max_{ i \in I } \: |y_{i} - {x}_{i}^{\top} {\theta}|.
\end{equation}

The obtained representation seems to be more useful than the original because
it is based on the well-known functions {\it max\/} and {\it min}. Moreover,
the representation allows of further reducing the problem. In particular, one
may change the order of the operations of taking minimum in (\ref{equ4}) and
get
\begin{equation} \label{equ5}
\min_{ I \in \Im_{\scriptstyle {h} } } \:
\min_{\theta} \: \max_{ i \in I } \: |y_{i} - {}{x}_{i}^{\top} {}{\theta}|.
\end{equation}

Assume $  I  $ to be a fixed subset of $  N $. Consider the problem
\begin{equation} \label{equ6}
P(I): \;
\min_{\theta} \: \max_{ i \in I } \: |y_{i} - {x}_{i}^{\top} {\theta}|.
\end{equation}
This is the well-known problem of fitting a linear function according to the
$  l_{\infty} $--criterion, first examined by Fourier in the early 19th
century \cite{Four26}. The method proposed by Fourier was actually a version
of the simplex algorithm and therefore (\ref{equ6}) may be regarded as one of
the oldest problems in linear programming. For modern methods and ideas, one
can be referred to \cite{Poly89}. Incidentally, by applying an additional
variable $  \rho $, we may shape (\ref{equ6}) into a usual form of linear
programming problems:
\begin{eqnarray}
& \min \: \rho &                                                  \nonumber \\
& \mbox{subject to} & \rho - x_{i}^{\top}\theta \geq -y_{i},   \; \;
        \rho + x_{i}^{\top}\theta \geq y_{i}, \; i \in I. \label{equ7}
\end{eqnarray}

To conclude this section, note that (\ref{equ5} may be regarded as a
"two--stage" problem of both combinatorial optimization and linear
programming. It consists in minimizing a function defined on a discrete set by
solving some linear programming problem.

\section{An Analysis of the Objective Function}

In this section we examine properties of the objective function in
(\ref{equ4}), {\it i.e.}
\begin{equation}\label{equ8}
F(\theta)=\min_{I \in \Im_{\scriptstyle h}} \: \max_{ i \in I } \:
|y_{i} - {x}_{i}^{\top} {\theta}|.
\end{equation}
The main question we will try to answer is how many local minima it can have.
To start the discussion, consider the function
$  \varrho_{I} (\theta)
= \max_{i \in I}  | y_{i} - x_{i}^{\top} \theta |, \; I \subset N $. It is a
piecewise linear and convex function bounded below. Clearly, the problem of
minimizing $  \varrho_{I}(\theta)  $ always has the solution.

The function $  \varrho_{I}(\theta)  $ can be portrayed as the surface of
a convex polyhedron in a $  (p+1) $--dimensional space. It is not difficult
to see that function (\ref{equ8}), which one may now express as
$ \: F(\theta)= \min_{ I \in \Im_{\scriptstyle h} } \varrho_{I} (\theta) $,
also allows of visualizing its graph as the surface of some polyhedron. It is
that produced by taking union of the polyhedra associated with
$  \varrho_{I}(\theta) $, for all $  I \in \Im_{h} $. Note that
$  F(\theta)  $ is still piecewise linear, but fails to be convex. An
illustration for $  p = 1  $ and $  n = 5  $ is given in Figure~\ref{fig1}.
\begin{figure}[hhh]
\setlength{\unitlength}{2.5mm}
%\vspace{-1ex}
\begin{center}

\begin{picture}(50,33)(0,-5)

\put(7,0){\line(-2,3){7}}
\put(7,0){\line(2,3){17}}
\put(14,0){\line(-1,5){5}}
\put(14,0){\line(1,5){5.2}}
\put(22,0){\line(-1,1){20}}
\put(22,0){\line(1,1){23}}
\put(28,0){\line(-1,2){13}}
\put(28,0){\line(1,2){12}}
\put(42,0){\line(-1,1){26}}
\put(42,0){\line(1,1){6}}
\put(0,0){\vector(1,0){48}}
\multiput(13,9)(0,-1.05){9}{\line(0,-1){0.55}}

\multiput(12,10)(-0.2,1.0){15}{\line(0,1){3}}
\multiput(12,10)(0.25,-0.25){4}{\line(0,1){2}}
\multiput(13,9)(0.25,0.375){16}{\line(0,1){2}}
\multiput(17,15)(0.2,1.0){5}{\line(0,1){3}}
\multiput(18,20)(0.25,-0.5){4}{\line(0,1){2.5}}
\multiput(19,18)(0.25,0.375){8}{\line(0,1){2}}
\multiput(21,21)(0.25,-0.25){44}{\line(0,1){2}}
\multiput(32,10)(0.25,0.25){8}{\line(0,1){2}}
\multiput(34,12)(0.25,0.5){24}{\line(0,1){2.5}}

\thicklines
\put(12,10){\line(-1,5){3}}
\put(13,9){\line(2,3){4}}
\put(12,10){\line(1,-1){1}}
\put(17,15){\line(1,5){1}}
\put(18,20){\line(1,-2){1}}
\put(19,18){\line(2,3){2}}
\put(21,21){\line(1,-1){11}}
\put(32,10){\line(1,1){2}}
\put(34,12){\line(1,2){6}}

\put(6,-2){$y_{1}$}
\put(12,-2){$\theta^{*}$}
\put(14,-2){$y_{2}$}
\put(21,-2){$y_{3}$}
\put(27,-2){$y_{4}$}
\put(41,-2){$y_{5}$}
\put(49,0){$\theta$}
\end{picture}
%\vspace{-4ex}
\caption{An objective function plot.}\label{fig1}
\end{center}
%\vspace{-2ex}
\end{figure}
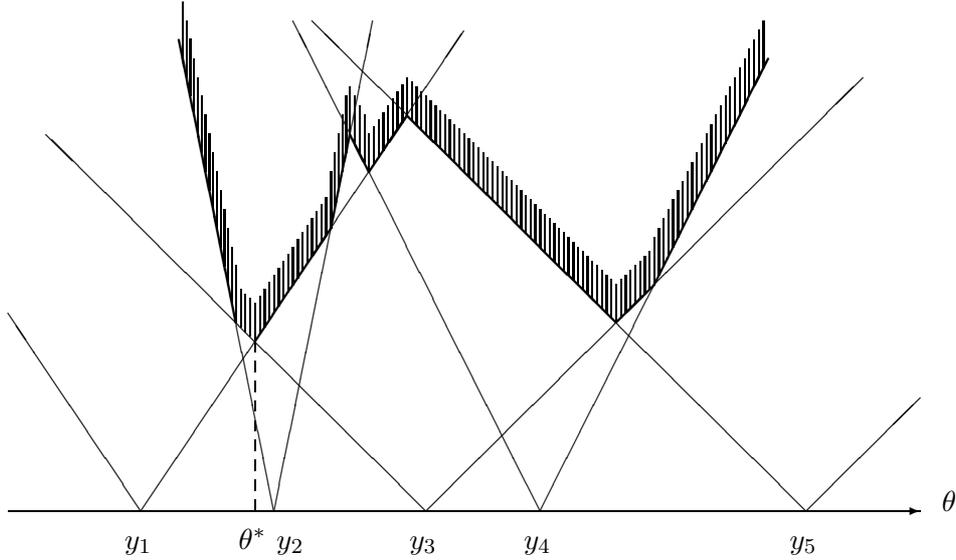

The objective function in Figure~\ref{fig1} is multi-extremal, it has three
local minima. It is clear that for practical problems, the number of the local
minima of (\ref{equ8}) can be enormous. To take the first step to determining
this number, we may conclude from representation (\ref{equ5}) that it must not
be greater than the number of problems $  P(I)  $ for all
$  I \in \Im_{h} $. This last is equal to $  n \choose{h} $, {\it i.e.\/}
the number of all $h$--subsets $  I \in \Im_{h} $.

Suppose $  \theta^{*}  $ to be the solution of a problem $  P(I)  $,
$  |I| \geq p+1  $. One can state the condition for the function
$  \varrho_{I}(\theta)  $ to have the minimum at $  \theta^{*}  $
(see \cite{Poly89}): it is necessary that there exist a $ (p+1)$--subset
$  I^{*} \subset I  $ and real numbers $  \lambda_{i}  $ to satisfy
\begin{equation}\label{equ9}
\sum_{ i \in I^{*} } \: \lambda_{i}   \varepsilon_{i}   x_{i}=0, \; \;
\sum_{ i \in I^{*} } \: \lambda_{i} = 1, \; \; \lambda_{i} \geq 0, \;
i \in I^{*},
\end{equation}
for some $  \varepsilon_{i} \in \{-1,1\} $. In other words,
$  \theta^{*}  $ is defined by the point of intersection of $  p+1  $
"active" hyperplanes
$\: \rho + \varepsilon_{i}   x_{i}^{\top}   \theta
= \varepsilon_{i}   y_{i} \:$ for some choice of $  \varepsilon_{i} \in
\{-1,1\}, \: i \in I^{*} $, provided that the intersection point is an "acute"
top of the corresponding polyhedron.

On the other hand, for any $(p+1)$--subset of indices, we are always able to
choose both $  \lambda_{i}  $ and $  \varepsilon_{i}  $ suitable to
satisfy (\ref{equ9}). To illustrate this, let us examine an arbitrary
$(p+1)$--subset. Without loss of generality, we assume it to be
$  \{1, \ldots , p+1 \} $. Consider the equation
$\; \sum_{i=1}^{p}   t_{i}   x_{i} = -t_{p+1}   x_{p+1}, $ and set
$  t_{p+1} = 1  $ in it. Since $\: \mbox{rank}(X) = p $, we may obtain
values of $  t_{1}, \ldots ,t_{p}  $ as the unique solution of the above
equation. For every
$  i=1, \ldots ,p+1 $, we define $ \; \lambda_{i}
= |t_{i}| / \sum_{j=1}^{p+1}|t_{j}|, \; \; \varepsilon_{i} = \mbox{sign}
(t_{i}) $. Obviously, $  \lambda_{i}, \: i=1, \ldots ,p+1 $, are just those
required in (\ref{equ9}).

As we have shown, the solution of any problem $  P(I)  $ is determined by
$  p+1  $ vectors $  x_{i} $. Conversely, any $  p+1  $ vectors
$  x_{i}  $ produce only one point which satisfies the necessary condition
(\ref{equ9}) and can therefore be treated as the solution of some problem.
Clearly, the number of the local minima of $  F(\theta)  $ must not be
greater than the number of such points, equaled $  {n \choose{p+1}} $. Since
we assume that $  p \leq n/2 $, our first estimate $  {n\choose{h}}  $ can
be improved by replacing it by $  {n \choose{p+1}} $. Although the last
estimate is still rough, yet it is much lower than the quantity $  n^{p}  $
considered in \cite{Edel90,Joss90,Souv87} as the order of the number of local
minima.

\section{The Exact Number of Local Minima}

We may now present our main result providing us with the exact number of
local minima in (\ref{equ4}). In fact, it allows of determining the number of
local minima for any function of the absolute residuals
$\; |y_{i} - x_{i}^{\top} \theta|, \; i \in N \:$, defined by using
representation (\ref{equ3}).

For each $  k=0,1, \ldots , n-(p+1) $, let us introduce the function
\begin{equation}\label{equ10}
f_{k}(\theta) = \min_{ I \in \Im_{\scriptstyle {n-k} } } \:
                \max_{ i \in I } \: |y_{i} - {x}_{i}^{\top} {\theta}|,
\end{equation}
and denote the number of its local minima by $  M_{k} $. It should be noted
that we have to set $  k=n-h=\lfloor \frac{n-1}{2} \rfloor  $ in
(\ref{equ10}) to produce the objective function of problem (\ref{equ4}).

\begin{theorem}\label{the}
For each $  k=0,1, \ldots ,n-(p+1) $, it holds
\begin{equation}\label{equ11}
M_{k} = { p+k \choose{p} }.
\end{equation}
\end{theorem}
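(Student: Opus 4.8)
The plan is to prove the formula by induction on $k$, exploiting the recursive structure relating $f_{k}$ to $f_{k-1}$: passing from $\Im_{n-k+1}$ to $\Im_{n-k}$ amounts to dropping one more index, i.e. deleting one observation. I would first establish the base case $k=0$: here $f_{0}(\theta)=\max_{i\in N}|y_{i}-x_{i}^{\top}\theta|=\varrho_{N}(\theta)$ is convex and piecewise linear, so it has exactly one local minimum, which matches $\binom{p}{p}=1$. The analysis from Section~3 is the key input: any local minimum of any $f_{k}$ sits at a point $\theta^{*}$ determined by a $(p+1)$-subset of indices via the active-hyperplane condition \eqref{equ9}, and conversely each $(p+1)$-subset yields exactly one candidate point.

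The main combinatorial idea I would pursue is to count, for each fixed $(p+1)$-subset $J\subset N$ producing candidate point $\theta_{J}^{*}$, when $\theta_{J}^{*}$ is actually a local minimum of $f_{k}$. Writing $\varrho_{J}(\theta_{J}^{*})=\delta_{J}$, the point $\theta_{J}^{*}$ is a local minimum of $f_{k}=\min_{I\in\Im_{n-k}}\varrho_{I}$ precisely when there exists an $(n-k)$-subset $I\supset J$ on which the maximum residual at $\theta_{J}^{*}$ equals $\delta_{J}$ and this $\varrho_{I}$ attains its minimum there — equivalently, when \emph{at most} $k$ of the remaining $n-(p+1)$ indices have residual $|y_{i}-x_{i}^{\top}\theta_{J}^{*}|>\delta_{J}$ (so that we can pad $J$ up to size $n-k$ using only indices with residual $\le\delta_{J}$, keeping $\theta_{J}^{*}$ the minimizer of that $\varrho_{I}$). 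So $M_{k}$ equals the number of $(p+1)$-subsets $J$ such that the number of "strictly-above" indices is $\le k$. This reduces the theorem to showing that this count equals $\binom{p+k}{p}$.

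To evaluate that count I would pass to the difference $M_{k}-M_{k-1}$: this is the number of $(p+1)$-subsets $J$ for which exactly $k$ indices lie strictly above the level $\delta_{J}$. I expect this to be exactly $\binom{p+k-1}{p-1}=\binom{p+k}{p}-\binom{p+k-1}{p}$, and proving this equality is where the real work lies. The natural approach is to set up a bijection: given such a $J$ together with its $k$ "above" indices, one forms a $(p+1+k)$-element configuration and argues that each such configuration arises from a \emph{unique} minimal $J$ — intuitively, among the $p+1+k$ points the "lowest" $(p+1)$-subset in the appropriate sense is forced. One should order the relevant observations by how they interact with the polyhedra and show that the map "take the $(p+1)$-subset realizing the minimum over the larger set" is well-defined and its fibers have the right size. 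The combinatorial heart — verifying that the above/below structure partitions these subsets so cleanly, handling genericity (ties, and the full-rank assumption ensuring each $(p+1)$-subset gives a genuine vertex), and confirming that a point satisfying the necessary condition \eqref{equ9} is genuinely a local minimum of $F$ rather than a saddle of the union-of-polyhedra surface — is the step I expect to be the main obstacle. A cleaner alternative worth trying is a direct induction on $n$ at fixed $p$: adding one observation either raises $M_{k}$ by the number of $(p+1)$-subsets whose candidate point drops below the new level, and a Pascal-type recurrence $\binom{p+k}{p}=\binom{p+k-1}{p}+\binom{p+k-1}{p-1}$ would then close the argument.
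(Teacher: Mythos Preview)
Your characterization of which candidate points are local minima of $f_{k}$ is incorrect, and this breaks the whole counting scheme. You claim that $\theta_{J}^{*}$ is a local minimum of $f_{k}$ precisely when at most $k$ of the remaining indices have residual strictly above $\delta_{J}$, arguing that one can then pad $J$ to an $(n-k)$-set $I$ so that $\varrho_{I}$ attains its minimum at $\theta_{J}^{*}$. But ``$\theta_{J}^{*}$ minimizes some $\varrho_{I}$ with $|I|=n-k$'' is \emph{not} the same as ``$\theta_{J}^{*}$ is a local minimum of $f_{k}=\min_{I'\in\Im_{n-k}}\varrho_{I'}$''. When $a(J)<k$ there are more than $n-k-(p+1)$ indices with residual below $\delta_{J}$, so some $(n-k)$-set $I'$ satisfies $\varrho_{I'}(\theta_{J}^{*})<\delta_{J}$; hence $f_{k}(\theta_{J}^{*})<\delta_{J}$ and $\theta_{J}^{*}$ is not a local minimum there. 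Concretely, take $p=1$, $n=5$, the location model $x_{i}\equiv 1$ with $y_{1}<\cdots<y_{5}$, and $k=2$. Your criterion $a(J)\le 2$ selects six pairs, whereas $M_{2}=\binom{3}{1}=3$; for instance $J=\{1,5\}$ has $a(J)=0$, yet $\theta_{J}^{*}=(y_{1}+y_{5})/2$ is not a local minimum of the median (cf.\ Figure~\ref{fig1}). The correct picture is that $\theta_{J}^{*}$ is a local minimum of $f_{k}$ only when the $(n-k)$th order statistic of the residuals at $\theta_{J}^{*}$ coincides with the \emph{maximum} of the $p+1$ active residuals (the lower order statistics among the tied active residuals produce saddles or local maxima), which forces $a(J)=k$ exactly. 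With that corrected characterization your telescoping via $M_{k}-M_{k-1}$ collapses, and you are left with the bare statement $|\{J:a(J)=k\}|=\binom{p+k}{p}$, which is itself the theorem in disguise.

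The paper proceeds quite differently: it double-counts the set $\Pi$ of all problems $P(I)$, $|I|\ge p+1$, once as $\sum_{j}\binom{n}{j}$ and once by attaching to each local minimum of each $f_{k}$ the $2^{n-(p+1)-k}$ problems sharing its solution; it then invokes the observation that $M_{k}$ depends only on $p$ (not on $n$) to turn the resulting identity into a triangular system that is solved by letting $n$ run through $p+1,p+2,\ldots$. This sidesteps entirely the delicate local analysis of which order statistic of the active residuals yields a genuine minimum.
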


\begin{proof}[Sketch of the proof]
Let $  \Pi  $ be the set of problems $  P(I)  $
for all $  I \subset N, \; |I| \geq p+1 $. To prove the theorem, we express
$  | \Pi |  $, {\it i.e.\/} the number of all the problems in $  \Pi $, in
two ways. Firstly, it is easy to see that this number may be calculated as the
sum
\begin{equation}\label{equ12}
| \Pi | = {n \choose{0}} + {n \choose{1}} + \ldots + {n \choose{n-(p+1)}}
        = \sum_{j=0}^{n-(p+1)} {n \choose{j}}.
\end{equation}

To produce the second representation, we examine a local minimum of the
function $  f_{k}(\theta)  $ for an arbitrary
$  k, \; 0 \leq k \leq n-(p+1) $. Assume $  \theta^{*}  $ to be the point
of the local minimum. It is clear that $\: \theta^{*} = \theta^{*}(I) \:$ is
the solution of some problem $  P(I)  $, where $  |I| = n-k $. Since
$  \theta^{*}  $ is actually determined by a subset $  I^{*} \subset I $,
which consists of $  p+1  $ "active" indices, it is also the solution of
problems $  P( I \setminus J )  $ for all $  J \subset I \setminus I^{*} $.
The number of the problems having the solution at $  \theta^{*}  $ coincides
with the number of all subsets of $  I \setminus I^{*}  $ including the
empty set $  \emptyset $, and equals $  2^{n-(p+1)-k} $. In that case, the
total number of the problems connected with the local minima of
$  f_{k}(\theta)  $ is $  2^{n-(p+1)-k} M_{k} $.

Now we may express $  | \Pi |  $ in the form:
\begin{equation}\label{equ13}
| \Pi | = 2^{n-(p+1)} M_{0} + 2^{n-(p+1)-1} M_{1} + \ldots + M_{n-(p+1)}
        = \sum_{j=0}^{n-(p+1)} 2^{n-(p+1)-j} M_{j}.
\end{equation}

From (\ref{equ12}) and (\ref{equ13}), we have
\begin{equation}\label{equ14}
\sum_{j=0}^{n-(p+1)} 2^{n-(p+1)-j} M_{j}
                                       = \sum_{j=0}^{n-(p+1)} {n \choose{j}}.
\end{equation}

It is not difficult to understand that for a fixed
$  k, \; 0 \leq k \leq n-(p+1) $, the number $  M_{k}  $ depends on
$  p  $, but does not on $  n $. One can consider $  M_{0}  $ as an
illustration. Because the problem $  P(N)  $ has the unique solution
(see \cite{Poly89}), $  M_{0}  $ is always equal to $  1 $. Also, it holds
$  M_{1} = p+1  $ independently on $  n $. To see this, note that every one
of the local minima of $  f_{1}(\theta)  $ can be produced by relaxing only
one of $  p+1  $ "active" constraints at the minimum point of
$  f_{0}(\theta) $.

Setting $  n=p+1, p+2, p+3, \ldots \; $ in (\ref{equ14}), we may successively
get $\; M_{0} = 1, \; M_{1} = p+1, \; M_{2} = \frac{(p+1)(p+2)}{2}, \ldots $.
It is not difficult to verify that the general solution of (\ref{equ14}) is
represented as (\ref{equ11}).
\end{proof}

Finally, substituting $\; k=\lfloor \frac{n-1}{2} \rfloor \;$ into
(\ref{equ11}), we conclude that the objective function of the LMS problem has
$ {p+\lfloor(n-1)/2 \rfloor \choose{p}}  $ local minima.

\section{Applications}

In this section we briefly outline LMS regression algorithms based on the
above analysis of the problem. Only the main ideas that underlie the
algorithms are presented.

\paragraph*{"Greedy" algorithm.} The algorithm produces an approximate solution and
consists of solving the sequence of problems (\ref{equ6}),
$  P(I_{0}), P(I_{1}), \ldots, P(I_{n-h}) $, where $  I_{0}=N  $ and the
sets $  I_{1}, I_{2}, \ldots, I_{n-h}  $ are defined as follows. Let
$  I_{k}^{*}  $ be the set of $  p+1  $ "active" indices for the solution
of a problem $  P(I_{k}) $. Clearly, for each $  i \in I_{k}^{*} $, the
minimum of the objective function in the problem
$  P(I_{k}\setminus\{i\})  $ is at least no greater than that in
$  P(I_{k}) $. Denote by $  i_{k}^{*}  $ the index that yields the problem
having the lowest solution. Finally, we define
$  I_{k+1} = I_{k} \setminus \{ i_{k}^{*} \} $.

The "greedy" algorithm formally requires solving
$  (n-h) \! \times \! (p+1)+ 1  $ optimization problems. In practice,
however, an efficient procedure of transition between points, which yields the
solutions of the problems, may be designed to avoid solving each of them.

\paragraph*{Exhaustive search algorithm.} This algorithm may be considered as the
complete version of the previous one which actually uses a reduced search
procedure. It exploits the classical depth-first search technique to provide
all local minima of the objective function. From Theorem~\ref{the}, one can
conclude that it requires examining $  {n-h+p+1\choose{p+1}}  $ points to
produce the exact solution. Because of its exponential time complexity, this
search algorithm can hardly be applied to problems of high dimensions. Note,
however, that it normally allows of solving problems with $  p \leq 5  $
within reasonable time.

\paragraph*{Branch and probability bound algorithm.} It is a random search algorithm
based on the Branch and Probability Bound (BPB) technique which has been
developed in \cite{Zhig91} as an efficient tool for solving both continuous
and discrete optimization problems. The BPB algorithm designed to solve the
LMS problem is of combinatorial optimization. It produces an approximate
solution by searching over $(p+1)$--subsets of $  N $. As it follows from
Section~3, each $(p+1)$--subset determines a point satisfying the condition
(\ref{equ9}), one of such points is the solution of the LMS problem.

In conclusion, I would like to thank Professor A.A.~Zhigljavsky for drawing my
attention to the problem and for valuable discussions, and Professor
A.C.~Atkinson for his kind interest in this work as well as for providing me
with a reprint of paper \cite{Atki91}.

\bibliography{An_analysis_of_the_Least_Median_of_Squares_regression_problem}

\end{document}